\newtheorem{theorem}{Theorem}[section]
\newtheorem{lemma}[theorem]{Lemma}
\newtheorem{proposition}[theorem]{Proposition}
\newtheorem{corollary}[theorem]{Corollary}
\newtheorem{rem}{Remark}
\newcommand{\henrik}[1]{\ifthenelse{\boolean{showcomments}}
{\textcolor{Blue}{Henrik says: #1}}{}}
\newcommand{\emma}[1]{\ifthenelse{\boolean{showcomments}}
{\textcolor{Green}{Emma says: #1}}{}}
\newcommand{\martin}[1]{\ifthenelse{\boolean{showcomments}}
{\textcolor{Skyblue}{Martin says: #1}}{}}
\newcommand{\john}[1]{\ifthenelse{\boolean{showcomments}}
{\textcolor{Red}{John says: #1}}{}}
\newcommand{\newtext}[1]{\ifthenelse{\boolean{shownew}}
{\textcolor{Red}{#1}}{}}
\newcommand{\hn}{$\mathcal{H}_2$ }
\newcolumntype{L}[1]{>{\raggedright\let\newline\\\arraybackslash\hspace{0pt}}m{#1}}
\newcolumntype{C}[1]{>{\centering\let\newline\\\arraybackslash\hspace{0pt}}m{#1}}
\newcolumntype{R}[1]{>{\raggedleft\let\newline\\\arraybackslash\hspace{0pt}}m{#1}}
\definecolor{gray3}{rgb}{0.75, 0.75, 0.75}
\definecolor{gray2}{rgb}{0.5, 0.5, 0.5}
\definecolor{gray1}{rgb}{0.25, 0.25, 0.25}
\definecolor{gray0}{rgb}{0.15, 0.15, 0.15}
\title{\LARGE \bf
Improving performance of droop-controlled microgrids through distributed PI-control}
\author{Emma Tegling, Martin Andreasson, John W. Simpson-Porco and Henrik Sandberg %
\thanks{E. Tegling, M. Andreasson and H. Sandberg are with the School of Electrical Engineering and the ACCESS Linnaeus Center, KTH Royal Institute of Technology, SE-100 44 Stockholm, Sweden {(\tt tegling, mandreas, hsan@kth.se)}. J. W. Simpson-Porco is with the Department of Electrical and Computer Engineering, University of Waterloo, ON, Canada {(\tt  jwsimpson@uwaterloo.ca)}.   }  
 }%
\begin{document}
\maketitle
\thispagestyle{empty}
\pagestyle{empty}

\begin{abstract}
This paper investigates transient performance of inverter-based microgrids in terms of the resistive power losses incurred in regulating frequency under persistent stochastic disturbances. We model the inverters as second-order oscillators and compare two algorithms for frequency regulation: the standard frequency droop controller and a distributed proportional-integral (PI) controller. The transient power losses can be quantified using an input-output \hn norm. We show that the distributed PI-controller, which has previously been proposed for secondary frequency control (the elimination of static errors), also has the potential to significantly improve performance by reducing transient power losses. This loss reduction is shown to be larger in a loosely interconnected network than in a highly interconnected one, whereas losses do not depend on connectivity if standard droop control is employed. 
%
%
Moreover, our results indicate that there is an optimal tuning of the distributed PI-controller for loss reduction.
Overall, our results provide an additional argument in favor of  distributed algorithms for secondary frequency control in microgrids. 
\end{abstract}

\section{Introduction}
\label{sec:intro}
Driven by environmental concerns and several economic factors, the electric power system is moving from a centralized generation paradigm towards a more distributed one. Local, small-scale  generation resources are expected to become prevalent in future power networks, as the penetration of renewable energy sources increases \cite{smart_report2012, Farhangi2010}. The \emph{microgrid} concept has gained popularity as a key strategy to facilitate this transition \cite{Markvart2006, Lasseter2002}. Microgrids are networks composed of distributed generation (DG) units, loads and energy storage elements which can either connect to a larger power grid, or operate independently from it, in ``islanded'' mode. 

The DG units within the microgrid are typically interfaced with the AC network via DC/AC or AC/AC power converters, or \emph{inverters}. The network's stability, synchronization and power balance depend on control actions taken in these inverters \cite{Lopes2006, ZhongHornik}. The standard control scheme employed to stabilize the system and achieve active power sharing, i.e., a desired steady-state distribution of power injections of inverter units, is \emph{droop control}, effectively a decentralized proportional controller. While droop control, under reasonable conditions (see e.g. \cite{SimpsonPorco2013}), is successful at stabilizing the network, it typically causes the steady-state network frequency to deviate from its nominal value \cite{Lopes2006}.

This deficiency motivates so-called secondary control, the goal of which is to eliminate the static error. In order to achieve this goal, control architectures with various degrees of centralization have been proposed. Unless carefully tuned, completely decentralized secondary controllers destroy the power sharing properties established by droop control, and may lead to a violation of generation constraints \cite{Andreasson2014ACC,Dorfler2014}. Conversely, centralized control requires a dense communication architecture and conflicts with the microgrid paradigm of autonomous management and scalability. This has motivated the development of distributed control algorithms which simultaneously eliminate frequency errors and maintain the optimality properties of droop control \cite{SimpsonPorco2013, SimpsonPorco2015,Andreasson2014ACC, ZhaoMallada2015}.

In this paper, we study one such control algorithm, which builds on the droop controller and combines it with integral control and distributed averaging algorithms. Stability and power sharing properties of such distributed PI-controllers have been studied in \cite{SimpsonPorco2013, SimpsonPorco2015} for first-order inverter models, and in \cite{Andreasson2014ACC,Andreasson2014TAC} for synchronous generator networks represented by second-order oscillators. Here, we model an inverter-based network with second-order dynamics. However, the novel aspect of the present paper lies in the analysis of transient performance, not in stability analysis.

We consider performance in terms of the resistive power losses incurred in regulating frequency under persistent small disturbances caused by, for example, variations in generation and loads. These losses are associated with power flows that arise from fluctuating phase angle differences, and can be regarded as a measure of control effort. The total transient power losses can be quantified through the \hn norm of an input-output system describing the coupled inverter dynamics, with an appropriately defined output. 

$\mathcal{H}_2-$based performance bounds have previously been used in \cite{Bamieh2012} to derive fundamental performance limits for large-scale vehicular platoons and consensus networks with spatial invariance. By exploiting spatial invariance properties, similar bounds for voltage regulation in DC power networks were derived in \cite{Rinehart2011}. For general coupled oscillator networks, robustness with respect to disturbances was studied in \cite{SiamiMotee2013} and methods to reduce inter-nodal interactions due to disturbances were proposed by \cite{Grunberg2014}. 

The present work adheres to \cite{BamiehGayme2013, Tegling2014}, where performance in terms of power losses was evaluated for synchronous generator networks, and to \cite{Tegling2015}, where droop-controlled microgrids with variable voltage dynamics were studied. These works showed that under the assumption of uniform generator parameters, the losses associated with frequency synchronization will scale unboundedly with the network size, but not depend on the network's connectivity. 
While this scaling of losses with network size appears to be a fundamental performance limit (cf. \cite{Bamieh2012}), the main result of this work is that these losses can be significantly decreased by applying distributed secondary PI-control. Surprisingly, we also find that the performance improvement over droop control is larger in a sparsely connected network than in a highly connected one. This stands in contrast to synchronization results in complex networks and power systems, which instead predict that densely connected networks are easier to synchronize \cite{Pecora1998, Schiffer2014}, are more coherent \cite{Bamieh2012}, and display faster rates of convergence \cite{Tang2011}. Our result therefore indicates that there is a fundamental trade-off between network coherency and transient power losses, in that additional power lines which strengthen synchronization also incur additional losses. 

Moreover, we find that there is an optimal tuning for the distributed PI-controller which minimizes the transient resistive losses. Numerically, we find that the optimal gain for the distributed averaging in the controller is often quite small, indicating that only only low-gain distributed feedback between controllers is needed to optimize transient performance. 



The remainder of this paper is organized as follows. We introduce the models for the inverters and the control strategies in Section~\ref{sec:setup}. In Section~\ref{sec:eval}, we evaluate performance and discuss network topology dependencies. In Section~\ref{sec:analysis}, optimal tuning of the distributed PI-controller is discussed, before we conclude in Section~\ref{sec:discussion}.  

\section{Problem setup}
\label{sec:setup}
Consider a network $\mathcal{G} = \{\mathcal{V}, \mathcal{E}\}$, where $\mathcal{V} = \{1, \ldots, N\}$ is the set of nodes and $\mathcal{E} = \{e_{ij}\}$ represents the set of edges, or network lines. Each network line is represented by a constant (complex) admittance $y_{ij} = g_{ij} - \mathbf{j}b_{ij}$, {where $g_{ij}, b_{ij} > 0$}. Throughout this paper, we will assume a Kron-reduced network model (see e.g. \cite{Varaiya1985, Nishikawa2015}), {where the reduction procedure eliminates the constant-impedance loads and absorbs their effects into} the network lines $\mathcal{E}$, along with any phase-shifting transformers. Consequently, every node $i \in \mathcal{V}$ represents a generation unit with a power inverter as its grid interface. Each node has an associated phase angle $\theta_i$ and voltage magnitude $|V_i|$. 

In this paper, we will focus on how frequency control impacts the performance in terms of resistive losses; see \cite{Tegling2015} for the impact of voltage droop control. 


\subsection{Inverter and droop control model}
\label{sec:model}
We first introduce the model for standard frequency droop controller. We assume the inverters at nodes $i \in \mathcal{V}$ to be AC voltage sources, whose frequency output can be regulated according to:
\begin{equation}
\label{eq:control}
\dot{\theta}_i = u_i,
\end{equation}
where $u_i$ is the control signal. The droop controller balances the active power demand through simple proportional control
\begin{equation}
\label{eq:droop}
u_i = \omega^* - m_i (\hat{P}_i - P_i^*), 
\end{equation}
where the controller gain $m_i>0$ is called the \emph{droop coefficient}, $\omega^*$ and $P^*_i$ are the frequency and active power setpoints, and $\hat{P}_i$ is the measured active power. Following \cite{Schiffer2014}, we assume measurement delay dynamics where $\hat{P}_i$ is measured and processed through a low-pass filter as
\begin{equation}
\label{eq:filter}
\tau_i \dot{\hat{P}}_i  = -\hat{P}_i +P_i\,,
\end{equation}
where $\tau_i>0$ is the time constant of the filter and $P_i$ is the actual power injection at node $i$ (Section~\ref{sec:powerflow}).

Now, we substitute \eqref{eq:droop} into \eqref{eq:control} and introduce the inverter frequency $\omega_i = \dot{\theta}_i$ to obtain
\begin{equation}
\label{eq:freq}
\omega_i = \omega^*- m_i(\hat{P}_i - P_i^*).
\end{equation}
Taking the time derivative of \eqref{eq:freq} gives $\dot{\omega}_i = -m_i \dot{\hat{P}}_i$ and by \eqref{eq:filter} we have that $\dot{\omega}_i = \frac{m_i}{\tau_i} (\hat{P}_i -P_i)$. Now, we can substitute $\hat{P}_i$ using \eqref{eq:freq} and obtain the frequency control dynamics as
\begin{equation}
\label{eq:droopdyn}
\begin{aligned}
\dot{\theta}_i & = \omega_i \\
\tau_i \dot{\omega}_i & = -\omega_i + \omega^* - m_i(P_i - P_i^*).
\end{aligned}
\end{equation} 

\begin{rem} The second-order frequency droop control model \eqref{eq:droopdyn} for inverter-based networks is analogous to the classical machine model for synchronous generators. The models are equivalent with respect to the performance measure considered here, see \cite{Tegling2014, Tegling2015}. We regard the parameters {$\tau_i, m_i$} in \eqref{eq:droopdyn} as design parameters and thus assume inverter-based networks throughout, although the setting can easily be extended to networks with both inverters and synchronous generators. We refer to \cite{Andreasson2014ACC} for a discussion on distributed PI control in synchronous generator networks. 
\end{rem}

\subsection{Distributed averaging proportional integral (DAPI) controller}
\label{sec:dapi}
 The droop controller \eqref{eq:droop} is completely decentralized, requiring only local measurements of active power for implementation. Under reasonable conditions, \eqref{eq:droop} guarantees the desired power sharing, and synchronizes the inverter network to a common steady-state frequency $\omega_{\rm ss}$; see \cite{SimpsonPorco2013} for an analysis. However, as droop control is effectively proportional control, it typically leads to static deviations of the steady-state frequency $\omega_{\rm ss}$ from the nominal frequency $\omega^*$. This deficiency motivates so-called secondary integral control, the goal of which is to eliminate the static error. 
 
Following \cite{SimpsonPorco2013, Andreasson2014ACC}, in this paper we consider a distributed integral control strategy which we refer to as \emph{distributed averaging proportional integral (DAPI)} control. For this purpose, assume that the inverters in the physical network, as described by $\mathcal{G} = \{\mathcal{V}, \mathcal{E}\}$, have access to a communication network represented by the graph $\mathcal{G}^C = \{\mathcal{V}, \mathcal{E}^C\}$. Let $\mathcal{N}^{C}_i$ denote the neighbor set of node $i$ in $\mathcal{G}^C $. The controller takes the form 
\begin{subequations}
\label{eq:dapidyn}
\begin{align} \label{eq:thetadapi}
\dot{\theta}_i & = \omega_i \\ \label{eq:omegadapi}
\tau_i \dot{\omega}_i & = -\omega_i + \omega^* - m_i(P_i - P_i^*) + \Omega_i \\ \label{eq:Omegadapi}
k_i \dot{\Omega}_i & = -\omega_i + \omega^* - \sum_{j  \in \mathcal{N}^{C}_i } c_{ij}(\Omega_i - \Omega_j),
\end{align}
\end{subequations}
where $\Omega_i$ is the secondary control variable and $k_i>0$ and $c_{ij} = c_{ji} >0, ~i \in \mathcal{V}, j \in \mathcal{N}^{C}_i$ are controller parameters. Notice that equations \eqref{eq:thetadapi}~-~\eqref{eq:omegadapi} are the droop controller dynamics \eqref{eq:droopdyn}, but with the additional secondary control input $\Omega_i$. { Hence, \eqref{eq:Omegadapi} can be thought of as a distributed integral controller appended to \eqref{eq:thetadapi}~-~\eqref{eq:omegadapi}.}

As shown in \cite{SimpsonPorco2013}, if the communication network $\mathcal{G}^{C}$ among the inverters is connected, the distributed controller \eqref{eq:dapidyn} restores the network frequency to $\omega^*$ while maintaining an optimal steady-state distribution of power injections among the inverters established by droop control. When all gains $c_{ij}$ are zero, \eqref{eq:Omegadapi} degenerates into a decentralized integral controller, and in this case \eqref{eq:dapidyn} possesses a large subspace of undesirable equilibria \cite[Lemma 4.1]{Dorfler2014}. In practice, such a control design destabilizes the network unless the controllers have access to accurate phasor measurements units (PMUs). We refer to \cite{Andreasson2014TAC} for an elaboration. 

To simplify upcoming notation, we define the weighted Laplacian matrix $L_C \in \mathbb{R}^{N \times N}$ of the communication graph $\mathcal{G}^C $ by ($L_{C,ij}$ denotes the element at row $i$ and column $j$):
\begin{equation}
\label{eq:Lcdef}
L_{C,ij} = \begin{cases} -c_{ij} & \text{if }j \in \mathcal{N}^{C}_i, ~j \neq i \\
\sum_{k \in \mathcal{N}^{C}_i} c_{ik } & \text{if } j = i\\
0 & \text{otherwise} \end{cases}
\end{equation} 

\begin{rem}
\label{rem:1storder}
The models \eqref{eq:droopdyn} and \eqref{eq:dapidyn} reduce to the first-order inverter models considered in \cite{SimpsonPorco2015} if $\tau_i = 0$ for all $i \in \mathcal{V}$.
\end{rem}

\subsection{Power flow}
\label{sec:powerflow}
Introducing $\theta_{ij} = (\theta_i - \theta_j)$ as the phase angle difference between neighboring nodes, we can write the active power injected to the grid at node $i\in \mathcal{V}$ as
\begin{equation}
\label{eq:powerflow}
P_i = \bar{g}_i |V_i|^2 + \sum_{j \in \mathcal{N}_i} |V_i||V_j|(g_{ij}\cos\theta_{ij} + b_{ij}\sin \theta_{ij} ).
\end{equation}
Here, $\mathcal{N}_i$ denotes the neighbor set of node $i$ in $\mathcal{G}$. $g_{ij}$ and $b_{ij}$ are respectively the conductance and susceptance associated with the line $e_{ij}$, and $\bar{g}_i$ is the shunt conductance of node $i$. As per convention in power flow analysis, we assume that all quantities in \eqref{eq:powerflow} have been normalized by system constants and are measured in per unit (p.u.). 


In what follows, we will use a simplified model in which we consider small deviations from a stable operating point. We can therefore approximate the power flows using the standard linear power flow assumption:
\begin{equation}
\label{eq:linprwflow}
P_i \approx \sum_{j \in \mathcal{N}_i} b_{ij} (\theta_i - \theta_j ) .
\end{equation}
See e.g. \cite{Purchala2005} for a general analysis of the applicability of this assumption and \cite{Tegling2015} for an error estimate with respect to the performance measure of interest.

In upcoming notation, we will use the network admittance matrix $Y\in \mathbb{C}^{N \times N}$, with elements given by $Y_{ii} = \bar{g}_i +\sum_{k \in \mathcal{N}_i} y_{ik}$, $Y_{ij} = -y_{ij}$ if $j \in \mathcal{N}_i, ~j \neq i$ and zero otherwise. 
The matrix $Y$ can be partitioned into a real and an imaginary part:
\begin{equation} \label{eq:defY}
Y = L_G + \mathrm{diag}\{\bar{g}\} - \mathbf{j}(L_B) , 
\end{equation} 
where $L_G$ denotes the network's conductance matrix and $L_B$ its susceptance matrix. By definition, the matrices $L_B$ and $L_G$ are weighted graph Laplacians of $\mathcal{G}$, with edge weights respectively defined by $b_{ij}$ and $g_{ij}$.  

Substituting the power flow equation \eqref{eq:linprwflow} into, respectively, the dynamics \eqref{eq:droopdyn} and \eqref{eq:dapidyn}, we notice that an equilibrium is given by $\omega = \omega^*$, $\theta = L_B^\dagger P^*$ and $\Omega = 0$ ($\dagger$ denotes the Moore-Penrose pseudo inverse). Without loss of generality, we translate this operating point to the origin through a change of variables. 

Further, we assume that the system is subject to small disturbances or persistent small amplitude noise, representing e.g. generation and load fluctuations, which we model as a distributed disturbance input $w$ acting on the inverters. We can then summarize the system dynamics as follows:
\vspace{2pt}


\noindent \textit{Standard droop control:}
 \begin{align}
\label{eq:ssdroop}
\begin{bmatrix}
\dot{\theta} \\ \dot{\omega} 
\end{bmatrix}& = \begin{bmatrix}
0 & I \\ -MT^{-1}L_B & -T^{-1}
\end{bmatrix} \begin{bmatrix}
\theta \\ \omega
\end{bmatrix} +\begin{bmatrix}
0 \\ T^{-1}
\end{bmatrix}w \\ \nonumber
& =: A_{\mathrm{std}} \psi_{\mathrm{std}} + B_{\mathrm{std}}w,
\end{align}

\noindent \textit{DAPI control:}
\begin{align} \nonumber 
\begin{bmatrix}
\dot{\theta} \\ \dot{\omega} \\ \dot{\Omega}
\end{bmatrix} = &\begin{bmatrix}
0 & I & 0 \\
-MT^{-1}L_B & -T^{-1} & T^{-1}\\
0 & -K^{-1} & -K^{-1}L_C
\end{bmatrix} \begin{bmatrix}
\theta \\ \omega \\ \Omega
\end{bmatrix} \\ \label{eq:ssdapi}  &  + \begin{bmatrix}
0 \\ T^{-1} \\ 0 
\end{bmatrix}w  =: A_{\mathrm{DAPI}} \psi_{\mathrm{DAPI}} + B_{\mathrm{DAPI}}w.
\end{align} 
Here, we have introduced the column vectors $\theta, ~\omega, ~\Omega$ containing the translated system states, with total state vectors $\psi_{\rm std} = (\theta,\omega)^T$ and $\psi_{\rm DAPI} = (\theta,\omega,\Omega)^T$. The system parameters are given by $M = \mathrm{diag}\{m_i\}$, $T = \mathrm{diag}\{\tau_i\}$, and  $K = \mathrm{diag}\{k_i\}$.

\subsection{System performance}
In this paper, we are concerned with the performance of the systems \eqref{eq:ssdroop}~-~\eqref{eq:ssdapi} in terms of the resistive power losses incurred in returning the system to a synchronous state following a small transient event, or in maintaining this state under persistent stochastic disturbances $w$. These losses are associated with the power flows that arise from fluctuating phase angle differences, and can be regarded as the control effort required to drive the system to a steady state with desired active power sharing. 

To define the relevant performance measure, we adopt the approach first presented in \cite{BamiehGayme2013}. Consider the real power loss over the edge $e_{ij}$, given by Ohm's law as $P^{\rm loss}_{ij} = g_{ij}|V_i - V_j|^2$. If we enforce the linear power flow assumptions and retain only the terms that are quadratic in the state variables, standard trigonometric methods give that $P^{\rm loss}_{ij} \approx g_{ij}(\theta_i - \theta_j)^2$. Since $\theta_i$ represents deviations from an operating point, this is equivalent to the power loss over the edge during the transient. The total instantaneous losses over the network are then approximately
\begin{equation}
\label{eq:pwrloss}
\mathbf{P}^{\mathrm{loss}} = \sum_{e_{ij} \in \mathcal{E}} g_{ij}(\theta_i - \theta_j)^2,
\end{equation}
which we can write as the quadratic form $\mathbf{P}^{\mathrm{loss}}  = \theta^T L_G \theta$. Since $L_G$ is a positive semidefinite graph Laplacian, it has a unique positive semidefinite square-root $L_G^{1/2}$. We can therefore define outputs of the systems \eqref{eq:ssdroop}~-~\eqref{eq:ssdapi} respectively as 
\begin{align} \label{eq:outputdroop}
y &= \begin{bmatrix}
L_G^{1/2} & 0 
\end{bmatrix} \psi_{\mathrm{std}} =: C_{\mathrm{std}} \psi_{std} \\ \label{eq:outputdapi}
y &= \begin{bmatrix}
L_G^{1/2} & 0  & 0
\end{bmatrix}\psi_{\mathrm{DAPI}} =: C_{\mathrm{DAPI}} \psi_{DAPI}
\end{align} which both give that $\mathbf{P}^{\mathrm{loss}} = y^Ty$. We now have two input-output mappings from $w$ to $y$: $H_{\mathrm{std}}$ given by \eqref{eq:droopdyn}, \eqref{eq:outputdroop} and $H_{\mathrm{DAPI}}$ given by \eqref{eq:dapidyn}, \eqref{eq:outputdapi}, which are linear-quadratic approximations of the full nonlinear problems. 

We have just established that the  instantaneous resistive losses incurred in the transient can be approximated by the (squared) Euclidean norm of the output $y$. 
 The losses due to a white noise disturbance input can thus be evaluated as the system's \hn norm, which is  
\[||H||_2^2 = \lim_{t\rightarrow \infty }\mathbb{E}\{y^T(t)y(t)\}. \]
The use of the \hn norm to quantify power losses can also be motivated under other input scenarios, see \cite{Tegling2014}.
\begin{rem}
The systems \eqref{eq:ssdroop}~-~\eqref{eq:ssdapi} represent linearized control dynamics in which line resistances are not present in the first approximation, having been assumed small compared to the line reactances. The outputs \eqref{eq:outputdroop}~-~\eqref{eq:outputdapi} represent quadratic approximations of the power losses and measure the effect of non-zero line resistances, given the state trajectories arising from the control dynamics. A rigorous justification for these assumptions are given in \cite{Tegling2015}.
\end{rem}

\section{Performance analysis}
\label{sec:eval}
In this section, we derive closed-form expressions for the performance of the systems \eqref{eq:ssdroop}~-~\eqref{eq:ssdapi} with respect to the outputs \eqref{eq:outputdroop}~-~\eqref{eq:outputdapi}, 
%
under the following Assumptions:
\begin{enumerate}[(i)]
\item \textit{Identical inverters.} All inverters have identical parameter settings and low-pass filters, i.e., $M = \mathrm{diag}\{m\}$, $T = \mathrm{diag}\{\tau\}$, $K = \mathrm{diag}\{k\}$.
\item \emph{Uniform resistance-to-reactance ratios.} The ratio of resistance to reactance, equivalently conductance to susceptance, of all lines are uniform and constant, i.e., 
\begin{equation}
\label{eq:alphadef}
\alpha : = \frac{g_{ij}}{b_{ij}}, 
\end{equation}
for all $e_{ij} \in \mathcal{E}$. This implies $L_G= \alpha L_B$.
\item \emph{Communication network topology.} The topology of the communication network $\mathcal{G}^{C}$ is identical to that of the physical network $\mathcal{G}$. We also assume 
\begin{equation}
\label{eq:gammadef}
L_C = \gamma L_B,
\end{equation}
i.e., $\gamma = \frac{c_{ij}}{b_{ij}}$, with $\gamma\ge 0$, for all $e_{ij}\in \mathcal{E} = \mathcal{E}^C$.
\end{enumerate}
Assumption (ii), which is also applied in e.g. \cite{LuChu2013, Dorfler2014}, can be motivated first by a uniformity in the physical line properties in a microgrid (i.e., materials and dimensions). Kron reduction of a network also increases its uniformity in node degrees \cite{Motter2013}. This makes the line properties more uniform in an effective network model.

Assumption (iii) implies that the secondary control layer is set up along the physical network lines, and is shown in \cite{Andreasson2014ACC} to constitute a sufficient criterion for load sharing with minimized generation costs. The assumption (17) says that the gain on the averaging term $\Omega_i-\Omega_j$ is set in proportion to the line susceptance $b_{ij}$, and will help us to obtain explicit analytic expressions for the \hn norms. In Section~\ref{sec:discussion}, we discuss possible implications of a relaxation of that assumption. 



\subsection{Input-output analysis}
The susceptance matrix $L_B$ is a weighted graph Laplacian and as such, it has a well-known eigenvalue at zero with the associated eigenvector $\mathbf{1} = (1,1,\ldots,1)^T$, that is, $L_B\mathbf{1} = 0$. The system matrices $A_{\mathrm{std}}$ in \eqref{eq:ssdroop} and $A_{\mathrm{DAPI}}$ in \eqref{eq:ssdapi} inherit this zero eigenvalue, which corresponds to a uniform drift of all phase angles $\theta$. This mode is, however, unobservable from the outputs, keeping the systems $H_{\rm std}$ and $H_{\rm DAPI}$ input-output stable under the given assumptions. 

The derivation of our main result relies on a unitary state transformation that divides the systems $H_{\rm std}$ and $H_{\rm DAPI}$ into $N$ decoupled subsystems, each associated with an eigenvalue $\lambda_n$ of $L_B$, for $n = 1,\ldots, N$. The \hn norm of the subsystem corresponding to the zero mode vanishes. Therefore, the full system's squared \hn norm becomes the sum of that of $N-1$ subsystems. The details of the derivation are outlined in the appendix.
\begin{theorem} \label{lem:mainres} Under Assumptions (i)~-~(iii), 
the squared \hn norm of the input-output mapping $H_\mathrm{std}$ is
\begin{equation}
\boxed{
\label{eq:stdnorm}
||H_{\mathrm{std}}||_2^2 = \frac{\alpha}{2m}(N-1).}
\end{equation}
The corresponding norm of the mapping $H_\mathrm{DAPI}$ is
\begin{equation}
\boxed{
\label{eq:dapinorm}
||H_{\mathrm{DAPI}}||_2^2 =  \frac{\alpha}{2m} \sum_{n = 2}^N \frac{1}{1+ \frac{\gamma \tau \lambda_n +k}{ \gamma \lambda_n (\gamma \tau \lambda_n +k) + k^2m \lambda_n }}.}
\end{equation}
These expressions represent the expected power losses due to a white noise disturbance input $w$.
\end{theorem}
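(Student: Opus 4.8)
The plan is to use Assumptions (i)--(iii) to simultaneously diagonalize every matrix in \eqref{eq:ssdroop}--\eqref{eq:outputdapi}. Since $M = mI$, $T = \tau I$, $K = kI$, $L_G = \alpha L_B$ and $L_C = \gamma L_B$, all of these matrices are polynomials in the single symmetric weighted Laplacian $L_B$. Writing $L_B = V\Lambda V^T$ with $V$ orthogonal and $\Lambda = \mathrm{diag}(\lambda_1,\ldots,\lambda_N)$, $\lambda_1 = 0 < \lambda_2 \le \cdots \le \lambda_N$, the unitary state transformation $\mathrm{diag}(V^T,V^T)$ for $H_{\mathrm{std}}$ (respectively $\mathrm{diag}(V^T,V^T,V^T)$ for $H_{\mathrm{DAPI}}$) turns each system into a direct sum of $N$ decoupled SISO subsystems $H_n$, each obtained from \eqref{eq:ssdroop}--\eqref{eq:ssdapi} by the scalar substitution $L_B \mapsto \lambda_n$ and with scalar output gain $\sqrt{\alpha\lambda_n}$. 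Since the $\mathcal{H}_2$ norm of a direct sum driven by independent white noise is the sum of the components' norms, $\|H\|_2^2 = \sum_{n=1}^N \|H_n\|_2^2$.

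Next I would discard the zero mode: for $n=1$ the output gain is $\sqrt{\alpha\lambda_1} = 0$, so that subsystem is unobservable and contributes nothing --- this is exactly why the uniform phase drift along $\mathbf{1}$ does not make $\|H\|_2$ infinite. For $n \ge 2$ one verifies by Routh--Hurwitz, using $\gamma, m, k, \tau > 0$ and $\lambda_n > 0$, that the subsystem's characteristic polynomial --- $\tau s^2 + s + m\lambda_n$ in the droop case, and $\tau k s^3 + (\tau\gamma\lambda_n + k) s^2 + (\gamma\lambda_n + mk\lambda_n + 1) s + m\gamma\lambda_n^2$ in the DAPI case --- is Hurwitz; hence each $\|H_n\|_2$ is finite and equals $\mathrm{tr}(C_n P_n C_n^T) = \alpha\lambda_n [P_n]_{11}$, where $P_n$ is the controllability Gramian solving $A_n P_n + P_n A_n^T + B_n B_n^T = 0$.

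For $H_{\mathrm{std}}$ this is a $2\times 2$ Lyapunov equation whose solution has $[P_n]_{11} = 1/(2m\lambda_n)$, so $\|H_n\|_2^2 = \alpha/(2m)$ independently of $\lambda_n$, and summing over $n = 2,\ldots,N$ gives \eqref{eq:stdnorm}. For $H_{\mathrm{DAPI}}$ I would instead eliminate $\hat\Omega_n$ and $\hat\omega_n$ from the transformed equations to obtain the scalar transfer function
\[
G_n(s) = \sqrt{\alpha\lambda_n}\,\frac{ks + \gamma\lambda_n}{\tau k s^3 + (\tau\gamma\lambda_n + k)s^2 + (\gamma\lambda_n + mk\lambda_n + 1)s + m\gamma\lambda_n^2},
\]
and evaluate $\|H_n\|_2^2 = \tfrac{1}{2\pi}\int_{-\infty}^{\infty} |G_n(\mathbf{j}\omega)|^2\,d\omega$ using the standard closed-form table for the $\mathcal{H}_2$ norm of a third-order rational function. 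Collecting terms, the resulting ratio of cubic polynomials in $\lambda_n$ simplifies exactly to $\tfrac{\alpha}{2m}\bigl(1 + \tfrac{\gamma\tau\lambda_n + k}{\gamma\lambda_n(\gamma\tau\lambda_n + k) + k^2 m\lambda_n}\bigr)^{-1}$, and summing over $n \ge 2$ yields \eqref{eq:dapinorm}. The expected-power-loss interpretation then follows from the identity $\|H\|_2^2 = \lim_{t\to\infty}\mathbb{E}\{y^T(t)y(t)\}$ recorded before the theorem.

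The main obstacle is this last step: the third-order Gramian (equivalently the tabulated $\mathcal{H}_2$ integral) produces a ratio of several-term polynomials in $\lambda_n, \gamma, k, m, \tau$ that must be massaged into the compact boxed form, and some care is needed so that the zero eigenvalue shared by $L_B$ and $L_C = \gamma L_B$ is handled correctly --- it is unobservable from the output for $\gamma>0$, and merely uncontrollable from $w$ in the degenerate case $\gamma=0$ --- so that only the modes $n \ge 2$ enter the sum. Everything else, namely the simultaneous diagonalization and the $2\times 2$ Lyapunov solve, is routine.
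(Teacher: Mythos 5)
Your proposal is correct and follows essentially the same route as the paper's appendix: unitary diagonalization of $L_B$ to decouple the system into $N$ modal subsystems, discarding the unobservable $\lambda_1=0$ mode, a Routh--Hurwitz check for the remaining modes, and a per-mode Gramian/$\mathcal{H}_2$ computation summed over $n=2,\ldots,N$. Using the controllability Gramian and the tabulated third-order $\mathcal{H}_2$ integral is just the dual of the paper's observability-Gramian formulation, and your algebra (the transfer function $G_n$, the $2\times 2$ solve giving $[P_n]_{11}=1/(2m\lambda_n)$, and the simplification to the boxed DAPI expression) checks out.
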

\begin{proof}
See appendix.
\end{proof}

The result in \eqref{eq:stdnorm} is the same as was obtained for networks of synchronous generators in \cite{BamiehGayme2013}, identifying the droop coefficient $m$ with the generator damping. 
These power losses scale linearly with the number of nodes $N$, a fact which seems to be a fundamental limitation to performance in networks where power flows are the mechanism by which the system regulates frequency, as proposed in \cite{Tegling2014}. When a secondary control layer is added through the DAPI control, the losses \eqref{eq:dapinorm} still grow with the number of nodes, but they are smaller in absolute terms. Consider the following Corollary to Theorem~\ref{lem:mainres}:
\begin{corollary}
For all $m,k, \tau, \gamma >0$, 
\[
||H_{\mathrm{DAPI}}||_2^2 < ||H_{\mathrm{std}}||_2^2 ,\]
i.e., the expected power losses due to the disturbance $w$ are smaller with the DAPI control strategy than with the standard droop control.
\end{corollary}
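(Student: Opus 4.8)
The plan is to compare the two squared $\mathcal{H}_2$ norms from Theorem~\ref{lem:mainres} term by term. The key observation is that the right-hand side of \eqref{eq:stdnorm} can be rewritten as $\|H_{\mathrm{std}}\|_2^2 = \frac{\alpha}{2m}\sum_{n=2}^N 1$, a sum of exactly $N-1$ unit terms whose index range matches that of the sum in \eqref{eq:dapinorm}. Hence it suffices to show that each summand of \eqref{eq:dapinorm} is strictly smaller than $1$, for every $n = 2, \ldots, N$.

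First I would record that, since $\mathcal{G}$ is connected, the weighted Laplacian $L_B$ has a simple zero eigenvalue and strictly positive eigenvalues $\lambda_n > 0$ for $n = 2, \ldots, N$. Together with $m, k, \tau, \gamma > 0$ this makes both the numerator $\gamma\tau\lambda_n + k$ and the denominator $\gamma\lambda_n(\gamma\tau\lambda_n + k) + k^2 m \lambda_n = \lambda_n\big(\gamma(\gamma\tau\lambda_n + k) + k^2 m\big)$ of the inner fraction appearing in \eqref{eq:dapinorm} strictly positive.

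Consequently, that inner fraction is strictly positive, so each summand in \eqref{eq:dapinorm} has the form $\frac{1}{1 + \varepsilon_n}$ with $\varepsilon_n > 0$, and is therefore strictly less than $1$. Summing over $n = 2, \ldots, N$ and multiplying by $\frac{\alpha}{2m} > 0$ gives $\|H_{\mathrm{DAPI}}\|_2^2 < \frac{\alpha}{2m}(N-1) = \|H_{\mathrm{std}}\|_2^2$, which is the claim.

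There is essentially no hard step here: once Theorem~\ref{lem:mainres} is in hand, the result reduces to a sign check on the inner fraction. The only point requiring a little care — and worth stating explicitly — is the positivity of $\lambda_n$ for $n \ge 2$, which is exactly where connectivity of $\mathcal{G}$ (implicit throughout) enters; without it some $\lambda_n$ could vanish and the corresponding subsystem would contribute a full unit term rather than a strictly smaller one, weakening the conclusion to $\le$.
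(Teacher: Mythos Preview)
Your argument is correct and is essentially the same as the paper's own proof: both rewrite $\|H_{\mathrm{std}}\|_2^2$ as $\frac{\alpha}{2m}\sum_{n=2}^N 1$ and then observe that each summand in \eqref{eq:dapinorm} is strictly less than $1$ because the inner fraction is positive. Your version is slightly more explicit in isolating where connectivity of $\mathcal{G}$ (hence $\lambda_n>0$ for $n\ge 2$) is used, but the structure of the argument is identical.
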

\begin{proof}
Notice that $1+ \frac{\gamma \tau \lambda_n +k}{ \gamma \lambda_n (\gamma \tau \lambda_n +k) + k^2m \lambda_n}>1$, since all terms are positive. Hence, $||H_{\mathrm{DAPI}}||_2^2< \frac{\alpha}{2m} \sum_{n = 2}^N 1 = \frac{\alpha}{2m}(N-1) = ||H_{\mathrm{std}}||_2^2 $.
\end{proof}

We note that the norms \eqref{eq:stdnorm} and \eqref{eq:dapinorm} both scale linearly with network's resistance-to-reactance ratio $\alpha$, and hence that the ratio of the norms is independent of $\alpha$. This suggests that, to first order, the relative performance improvement of distributed PI-control over droop control does not deteriorate as grid resistances increase. 
\begin{figure}
\centering
\begin{tikzpicture}
	\pgfplotsset{every tick label/.append style={font=\footnotesize}}
	\begin{axis}
	[xlabel={$N$ },
	ylabel={$||H||_2^2$ },
	ylabel near ticks,
	xlabel near ticks,
	xmin=0,
	xmax=100,
	ymin=0,
	ymax=10,
	yticklabel style={/pgf/number format/.cd,
		fixed,
		precision=4},
	grid=major,
	height=4.5cm,
	width=\columnwidth,
	legend cell align=left,
	legend style={at={(0,1)},anchor= north west, font = \scriptsize},
	legend entries={Standard droop control\\  DAPI, complete graph \\ DAPI, line graph \\}, cycle list name=mygraylist,
	]
	\foreach \x in {1, 2,3}{
	\addplot +[mark=none, very thick, ] table[x index=0,y index=\x,col sep=space]{Figures/Plot_data/linecomplete.txt};
	}	
	\end{axis}
	\end{tikzpicture}	
	\vspace{-0.45cm}
\caption{\hn norms in \eqref{eq:stdnorm}~-~\eqref{eq:dapinorm} for sample networks of size $N$ with line graph and complete graph topologies. Note that $||H_{\rm std}||_2^2$ in \eqref{eq:stdnorm} is topology-independent. Here, $k = \gamma = m = 1$, line susceptances $b_{ij}$ are uniformly distributed on $[0.5 , 1.5]$.  }
\label{fig:linecomplete}
\end{figure}
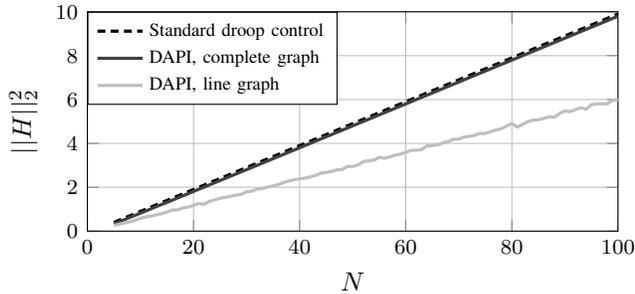

\subsection{Losses' dependence on network connectivity}
It is interesting to note that while the losses under standard droop control \eqref{eq:stdnorm} are entirely independent of network topology, the losses that are incurred under DAPI control \eqref{eq:dapinorm} depend on network topology through the eigenvalues $\lambda_n$ of $L_B$. In fact, the expression is monotonically increasing in $\lambda_n$, implying that losses grow with increasing network connectivity. {This in particular implies that the relative performance improvement of DAPI control over droop control will be largest for sparse network topologies, such as those found in standard distribution networks and microgrids.} The best performance can be expected to be achieved for a line graph topology. In Fig.~\ref{fig:linecomplete} we compare such a topology to a complete graph with respect to the results in Theorem~\ref{lem:mainres}. Although losses for both topologies grow with the network size, as discussed in the previous section, the comparison confirms the lower losses obtained in the line graph case.  

The fact that a loosely interconnected network may outperform a highly interconnected network by incurring smaller power losses in maintaining synchrony is surprising in light of typical notions of power system stability. For example, the connectivity of a network is directly related to its ability to synchronize \cite{Dorfler2010,Pecora1998,Schiffer2014} as well as its damping and rate of convergence \cite{Tang2011}. Our results show that, although additional network lines may improve phase coherence and stability, they also lead to additional power flows that incur losses. Hence, there is a trade-off between performance objectives.


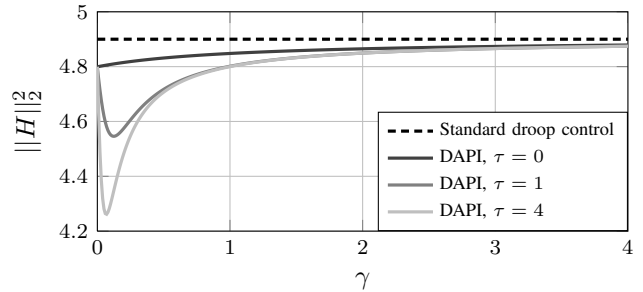
\begin{figure}
\centering
\begin{tikzpicture}
	\pgfplotsset{every tick label/.append style={font=\footnotesize}}
	\begin{axis}
	[xlabel={$\gamma$ },
	ylabel={$||H||_2^2$ },
	ylabel near ticks,
	xlabel near ticks,
	xmin=0,
	xmax=4,
	ymin=4.2,
	ymax=5,
	yticklabel style={/pgf/number format/.cd,
		fixed,
		precision=4},
	grid=major,
	height=4.5cm,
	width=\columnwidth,
	legend cell align=left,
	legend style={at={(1,0)},anchor= south east, font = \scriptsize},
	legend entries={Standard droop control\\  DAPI, $\tau = 0$ \\ DAPI, $\tau = 1$ \\ DAPI, $\tau = 4$ \\}, 
	cycle list name=myothergraylist
	]
	\foreach \x in {1,2,3,4}{
	\addplot +[mark=none, very thick, ] table[x index=0,y index=\x,col sep=space]{Figures/Plot_data/varygamma.txt};	}	
	\end{axis}
	\end{tikzpicture}	
\caption{\hn norms in \eqref{eq:dapinorm} as a function of $\gamma$ for a complete graph with $N = 50$ nodes. Here, $k = m = 1$, and the filter time constant $\tau \in \{0,1,4\}$. For $\tau = 0$, the system \eqref{eq:ssdapi} reduces to a first order model, and the optimal $\gamma^* = 0$.   }
\label{fig:gamma}
\end{figure}

\begin{figure*}[htb]
\centering
\includegraphics[width = 0.85\textwidth]{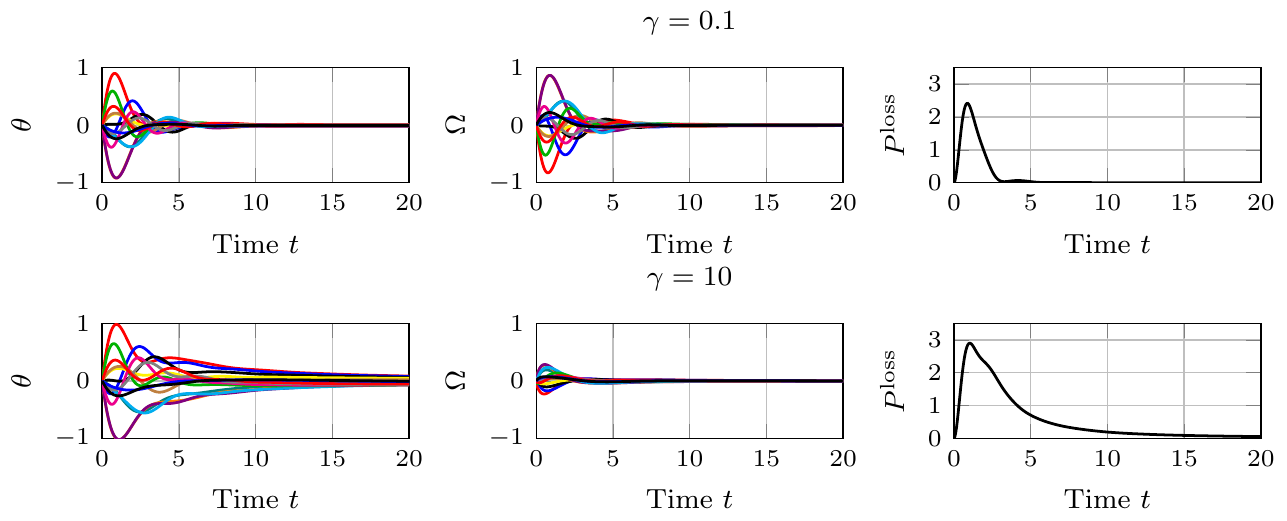}
\caption{Simulation of the system \eqref{eq:ssdapi} on a 20-node line network, with associated power losses \eqref{eq:pwrloss}. Here, $m = k = \tau = 1$ and $\gamma = 0.1$ (upper panel) and $\gamma = 10$ (lower panel). }
\label{fig:simulation}
\end{figure*}

\section{Control design for loss reduction }
\label{sec:analysis}
In the previous section, we established that the DAPI control strategy improves performance in terms of transient power losses for droop-controlled microgrids. We now turn to the question of optimal tuning of this controller. That is, how should the integral action $k$ in \eqref{eq:dapidyn} and the communication gain parameter $\gamma$ in \eqref{eq:gammadef} be chosen to minimize transient losses, with respect to a given droop-controlled network.  


\subsection{Communication gain}
As discussed in Section~\ref{sec:setup}, distributed PI control requires a communication network through which inverters can communicate their secondary control variables $\Omega_i$. 
While any non-zero gains $c_{ij}$ for the distributed averaging will guarantee that the control objectives are reached \cite{SimpsonPorco2015}, an important design question is how to choose these gains to optimize {the transient performance considered herein}. In our case, this choice is reflected through the parameter $\gamma$ in \eqref{eq:gammadef}.

Fig.~\ref{fig:gamma} displays the transient power losses associated with the DAPI control, as given by \eqref{eq:dapinorm}, as a function of $\gamma$ for a sample network with a complete graph structure. As the figure indicates, it turns out that there exists a distinct optimal value for $\gamma \ge 0$:
\begin{lemma}
\label{lem:optimum}
For a given network with DAPI control \eqref{eq:ssdapi} and under Assumptions (i)~-~(iii) of Section~\ref{sec:eval}, there is a unique communication gain ratio $\gamma^* $ which minimizes the \hn norm \eqref{eq:dapinorm}. 
\end{lemma}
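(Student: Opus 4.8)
My plan is to treat $J(\gamma):=\|H_{\mathrm{DAPI}}\|_2^2$ from \eqref{eq:dapinorm} as a smooth scalar function on $[0,\infty)$ and prove separately that a minimizer (i)~exists and (ii)~is unique. The first step is a useful rewriting: the inner fraction in the $n$-th summand of \eqref{eq:dapinorm} equals $1/(\lambda_n\phi_n(\gamma))$ with $\phi_n(\gamma):=\gamma+\frac{k^2m}{\tau\lambda_n\gamma+k}$ (divide $\gamma\lambda_n(\gamma\tau\lambda_n+k)+k^2m\lambda_n$ by $\gamma\tau\lambda_n+k$), so that
\[
J(\gamma)=\frac{\alpha}{2m}\sum_{n=2}^N\frac{\lambda_n\phi_n(\gamma)}{1+\lambda_n\phi_n(\gamma)} .
\]
Each $\phi_n$ is strictly convex on $[0,\infty)$, since $\phi_n''(\gamma)=2k^2m\tau^2\lambda_n^2/(\tau\lambda_n\gamma+k)^3>0$, and is minimized where $\tau\lambda_n\gamma+k=k\sqrt{m\tau\lambda_n}$; as $x\mapsto x/(1+x)$ is strictly increasing, each summand is strictly decreasing on $[0,\gamma_n^\star]$, strictly increasing on $[\gamma_n^\star,\infty)$, with $\gamma_n^\star:=\max\{0,\,k(\sqrt{m\tau\lambda_n}-1)/(\tau\lambda_n)\}$, and tends to $1$ as $\gamma\to\infty$. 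Equivalently, with $Q_n(\gamma):=\tau\lambda_n^2\gamma^2+\lambda_n(\tau+k)\gamma+k(1+km\lambda_n)>0$, one has $J'(\gamma)=\frac{\alpha}{2m}\sum_{n=2}^N\lambda_n\big((\tau\lambda_n\gamma+k)^2-k^2m\tau\lambda_n\big)/Q_n(\gamma)^2$, in which every bracket is strictly increasing in $\gamma\ge0$ with at most one positive zero $\gamma_n^\star$.

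Existence is then immediate: $J$ is continuous on $[0,\infty)$, $\lim_{\gamma\to\infty}J(\gamma)=\frac{\alpha}{2m}(N-1)=\|H_{\mathrm{std}}\|_2^2$ (each summand tends to $1$), while by the Corollary $J(\gamma)<\frac{\alpha}{2m}(N-1)$ for every finite $\gamma$; hence $J>J(0)$ outside some $[0,\Gamma]$, and $J$ attains its minimum on that compact set.

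For uniqueness I would reduce to a second-order test: the claim to prove is that \emph{every critical point of $J$ in $[0,\infty)$ is a strict local minimum}, i.e. $J'(\gamma_0)=0\Rightarrow J''(\gamma_0)>0$ (at $\gamma_0=0$ read as a one-sided strict minimum). Granting this, uniqueness follows from a short case split on $\operatorname{sign}J'(0)$: if $J'(0)\ge0$ then $J$ has no interior critical point (an interior zero of $J'$ approached with $J'\ge0$ just to its left would be a local maximum, contradicting the claim), so $J$ is nondecreasing and $\gamma^\star=0$; if $J'(0)<0$ then $J'$ has an interior zero (since $J'(\gamma)\to0^+$ as $\gamma\to\infty$) but cannot have two, as two strict local minima of $J$ would force a local maximum — hence a third critical point — between them, so $J'$ is negative then positive and $\gamma^\star$ is its unique interior zero.

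The main obstacle is this claim. Writing $u_n:=\lambda_n\phi_n$, $a_n:=(1+u_n)^{-1}\in(0,1)$ and $z_n:=\tau\lambda_n\gamma_0+k$, one gets $J'\propto\sum_n a_n^2u_n'$ and $J''\propto\sum_n(a_n^2u_n''-2a_n^3(u_n')^2)$, with $u_n'=\lambda_n-k^2m\tau\lambda_n^2/z_n^2$ and $u_n''=2k^2m\tau^2\lambda_n^3/z_n^3>0$; the critical-point relation $\sum_n a_n^2u_n'=0$ forces the $u_n'$ to have mixed sign. What remains is thus the constrained polynomial inequality $\sum_n a_n^2u_n''>2\sum_n a_n^3(u_n')^2$ on $\{\sum_n a_n^2u_n'=0\}$, which I would attack by inserting the closed forms, clearing the positive denominators, using the constraint to eliminate one block of monomials, and bounding the remainder term by term (exploiting $0<a_n<1$ and the sign pattern the constraint imposes on the $u_n'$). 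Should a clean general estimate be awkward, the fallback is to show directly that the degree-$(4N-6)$ numerator $R(\gamma):=\sum_n\lambda_n\big((\tau\lambda_n\gamma+k)^2-k^2m\tau\lambda_n\big)\prod_{j\ne n}Q_j(\gamma)^2$ of $J'$ changes sign exactly once on $(0,\infty)$, using that it is eventually positive and that each of its constituents ($Q_j>0$ throughout; each bracket increasing with one positive zero) is sign-stable. Either way, this "sum of $N-1$ unimodal functions with distinct valleys $\gamma_n^\star$ is again unimodal" statement is where the real work sits.
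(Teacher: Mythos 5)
Your reformulation and computations are all correct: rewriting the $n$-th summand of \eqref{eq:dapinorm} as $\lambda_n\phi_n/(1+\lambda_n\phi_n)$ with $\phi_n(\gamma)=\gamma+k^2m/(\tau\lambda_n\gamma+k)$, the expression for $J'$ over the positive denominators $Q_n^2$, the per-summand valleys $\gamma_n^\star=\max\{0,\,k(\sqrt{m\tau\lambda_n}-1)/(\tau\lambda_n)\}$ (which correctly reproduce Corollary~\ref{lem:completeoptimum} when all nonzero eigenvalues equal $Nb$), and the existence argument via compactness are sound. The problem is that the proof stops exactly where the lemma begins, and you say so yourself: uniqueness is never established. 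Each summand is quasi-convex (decreasing then increasing), but a sum of quasi-convex functions with distinct valleys is not quasi-convex in general, so the assertion that every critical point of $J$ is a strict local minimum --- equivalently your constrained inequality $\sum_n a_n^2u_n''>2\sum_n a_n^3(u_n')^2$ on $\{\sum_n a_n^2u_n'=0\}$, or the claim that $R(\gamma)$ changes sign exactly once --- is the entire content of the lemma once $N>2$ and the $\lambda_n$ are not all equal. Your structural observations do show $J'>0$ for $\gamma>\max_n\gamma_n^\star$, but they do not exclude several sign changes of $J'$ inside $[\min_n\gamma_n^\star,\max_n\gamma_n^\star]$, which is precisely where a second local minimum could hide. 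Neither of your two proposed attacks is carried out, so the lemma remains unproven.

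In fairness, the paper's own proof is no more complete: it states that $\gamma^*$ is ``the positive root'' of $\frac{\mathrm{d}}{\mathrm{d}\gamma}\|H_{\mathrm{DAPI}}\|_2^2=0$ (or $0$ if no such root exists) and omits all details, thereby silently presupposing that this derivative has at most one positive root --- exactly the claim you isolate as ``where the real work sits.'' Your write-up is thus a more explicit and more honest version of the same incomplete argument: it rigorously settles existence, the single-mode case, and the limiting behaviour $J(\gamma)\to\frac{\alpha}{2m}(N-1)$, but the unimodality of the sum is an unproven assertion in both your proposal and the paper, and closing that gap (or restricting the lemma's statement, e.g.\ to graphs with a single distinct nonzero Laplacian eigenvalue) is what an actual proof would require.
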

\begin{proof}
The optimum is given by the positive root of the equation $\frac{\rm d}{\mathrm{d}\gamma}||H_{\rm DAPI}||_2^2 \big|_{\gamma = \gamma^*} = 0$.  If there is no such root, $\gamma^*=0$. The details are omitted due to space limitations.  
\end{proof}
The value of $\gamma^*$ is strongly dependent on the network parameters, but once these are given, it is easy to find the optimal tuning. We note that the optimal $\gamma^*$ is often very small, in particular if the time constant $\tau$ is small. In the limit where $\tau = 0$, we have $\gamma^* =0$. However, we cannot choose a design where $\gamma = 0$ without causing an undesirable drift in the system, which in practice causes instabilities (see Section~\ref{sec:dapi}). If $\gamma$, on the other hand, is set too large, the distributed averaging term of \eqref{eq:dapidyn} converges too fast compared to the phase angles, and deteriorates the damping effect of the secondary control. A simulation of this case is shown in Fig.~\ref{fig:simulation}.

For complete graphs, the potential for performance improvement is smaller than for more sparsely connected networks. An optimized controller tuning is therefore particularly relevant. For this case, we provide a closed-form expression for $\gamma^*$:
\begin{corollary}
\label{lem:completeoptimum}
If the graph underlying the network $\mathcal{G}$ is complete and the line susceptances $b_{ij} = b$ for all $e_{ij} \in \mathcal{E}$, then $\gamma^*$ is given by
\begin{equation}
\label{eq:optimalcomplete}
\gamma^* = \frac{k}{Nb\tau}\left( \sqrt{Nb m \tau } -1 \right)
\end{equation}
if $Nb m \tau>1$. Otherwise, $\gamma^* = 0$
\end{corollary}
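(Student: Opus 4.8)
The plan is to specialize the general expression \eqref{eq:dapinorm} to the complete-graph case, reduce the minimization over $\gamma$ to a single scalar optimization problem that no longer involves the sum over $n$, and then solve the resulting first-order condition in closed form. The key structural observation is that for a complete graph on $N$ nodes with uniform susceptance $b_{ij} = b$, the nonzero eigenvalues of $L_B$ are all equal: $\lambda_n = Nb$ for $n = 2, \ldots, N$. Hence every term in the sum in \eqref{eq:dapinorm} is identical, and
\[
||H_{\mathrm{DAPI}}||_2^2 = \frac{\alpha}{2m}(N-1)\cdot \frac{1}{1 + \dfrac{\gamma \tau (Nb) + k}{\gamma (Nb)\big(\gamma \tau (Nb) + k\big) + k^2 m (Nb)}}.
\]
Minimizing this over $\gamma \ge 0$ is equivalent to \emph{maximizing} the second fraction in the denominator, i.e., maximizing
\[
f(\gamma) := \frac{\gamma \tau (Nb) + k}{\gamma (Nb)\big(\gamma \tau (Nb) + k\big) + k^2 m (Nb)}
\]
over $\gamma \ge 0$.

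Next I would carry out the scalar optimization. Write $\mu := Nb$ for brevity, so that $f(\gamma) = \dfrac{\mu\tau\gamma + k}{\mu^2\tau\gamma^2 + \mu k \gamma + k^2 m\mu}$. Since $f$ is a ratio of a linear over a quadratic in $\gamma$ with all coefficients positive, $f(0) = k/(k^2 m \mu) = 1/(k m \mu) > 0$, and $f(\gamma) \to 0$ as $\gamma \to \infty$; thus any interior critical point with $f$ exceeding $f(0)$ is the unique maximizer, and otherwise the maximizer is $\gamma^* = 0$. Setting $f'(\gamma) = 0$ and clearing the (positive) denominator gives a quadratic in $\gamma$; the numerator of $f'$ is
\[
\mu\tau\big(\mu^2\tau\gamma^2 + \mu k\gamma + k^2 m\mu\big) - (\mu\tau\gamma + k)\big(2\mu^2\tau\gamma + \mu k\big) = 0,
\]
which, after expanding and cancelling, reduces to a relation of the form $\mu^3\tau^2\gamma^2 + 2\mu^2\tau k\gamma - (\mu k^2 m\tau - k^2)\mu = 0$, i.e. a quadratic $a\gamma^2 + b\gamma + c = 0$ with $a = \mu^2\tau^2 > 0$, $b = 2\mu\tau k$, and $c = k^2(1 - m\mu\tau)$. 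The positive root (when it exists) is $\gamma^* = \dfrac{-b + \sqrt{b^2 - 4ac}}{2a}$, and substituting the coefficients and simplifying the discriminant — which becomes a perfect square, $b^2 - 4ac = 4\mu^2\tau^2 k^2 m\mu$ — yields $\gamma^* = \dfrac{k}{\mu\tau}\big(\sqrt{m\mu\tau} - 1\big) = \dfrac{k}{Nb\tau}\big(\sqrt{Nbm\tau} - 1\big)$, which is exactly \eqref{eq:optimalcomplete}. This root is positive precisely when $\sqrt{Nbm\tau} > 1$, i.e. $Nbm\tau > 1$; when $Nbm\tau \le 1$ the quadratic has no positive root ($c \ge 0$, $b > 0$), so $f$ is decreasing on $[0,\infty)$ and $\gamma^* = 0$, matching the stated dichotomy. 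One should also check the second-order condition (or simply the sign of $f'$ near $0$) to confirm this critical point is a maximum of $f$ and hence a minimum of the norm; since $f(0) > 0$, $f \to 0$ at infinity, and there is a single positive critical point, this is automatic.

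The main obstacle I anticipate is purely algebraic rather than conceptual: correctly expanding the quotient-rule numerator and recognizing that the discriminant collapses to the perfect square $4\mu^2\tau^2 k^2 m\mu$, so that the square root simplifies cleanly to $2\mu\tau k\sqrt{m\mu\tau}$. A sign error or a mis-collected term there would obscure the clean form \eqref{eq:optimalcomplete}. It is worth double-checking the computation by verifying the limiting behavior: as $\tau \to 0$ the expression $\frac{k}{Nb\tau}(\sqrt{Nbm\tau}-1)$ should be checked against the condition $Nbm\tau > 1$ failing, consistent with the paper's remark that $\gamma^* = 0$ when $\tau = 0$; and for large $\tau$, $\gamma^* \approx \frac{k}{Nb}\sqrt{\frac{Nbm}{\tau}} = k\sqrt{\frac{m}{Nb\tau}}$, which is small, consistent with the observation in the text that the optimal gain tends to be small. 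Everything else (the eigenvalue computation for $K_N$, the monotone reduction of the $N$-term sum to one term) is standard.
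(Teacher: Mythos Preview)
Your proposal is correct and follows exactly the approach the paper sketches: observe that all nonzero Laplacian eigenvalues of the complete graph equal $Nb$, reduce \eqref{eq:dapinorm} to a single term, and solve the first-order condition in $\gamma$ (the paper merely writes ``it then suffices to evaluate $\frac{\mathrm{d}}{\mathrm{d}\gamma}(\cdots)=0$ and the result follows,'' whereas you carry out the algebra explicitly). One small slip: your stated discriminant $4\mu^2\tau^2 k^2 m\mu$ is missing a factor of $\tau$ --- the correct value is $4\mu^3\tau^3 k^2 m$ --- but your subsequent square root $2\mu\tau k\sqrt{m\mu\tau}$ and the final formula are nevertheless correct.
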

\begin{proof}
When edge weights $b$ are uniform, the $N-1$ non-zero eigenvalues of the complete graph Laplacian $L_B \in \mathbb{R}^{N\times N}$ are all given by $Nb$. It then suffices to evaluate $\frac{\mathrm{d}}{\mathrm{d}\gamma}\frac{1}{1+ \frac{\gamma \tau Nb +k}{ \gamma Nb (\gamma \tau Nb +k) + k^2m Nb}} = 0 $ and the result follows. 
\end{proof}

\subsection{Integral action}
Now, consider the parameter $k$ in \eqref{eq:dapidyn}, which reflects the amount of integral action in the DAPI controller. First, notice that in the limit where $k \rightarrow \infty$ the integral action vanishes and the standard droop control dynamics \eqref{eq:droopdyn} are retrieved, with the associated \hn norm \eqref{eq:stdnorm}. It is easy to show based on \eqref{eq:dapinorm} that as $k$ then decreases, losses are reduced monotonically and at an increasing rate. On the other hand, in the theoretical limit of an infinitely large integral gain ($k = 0$), the system can become arbitrarily well damped and losses minimized. Fig.~\ref{fig:kgamma} displays the relative performance improvement achieved through the DAPI strategy as a function of $k$, for a hypothetical network based on the IEEE 57-bus benchmark system topology \cite{testarchive}.

Our results also indicate that the importance of the distributed averaging term in \eqref{eq:dapidyn} increases as the integral action decreases. That is, the optimal communication gain given by $\gamma^*$ grows as $k$ grows. For a complete graph with uniform edge weights, this relationship is linear, by Corollary~\ref{lem:completeoptimum}. For the IEEE 57-bus benchmark system topology we display this relationship between $k$ and $\gamma^*$ in Fig.~4.

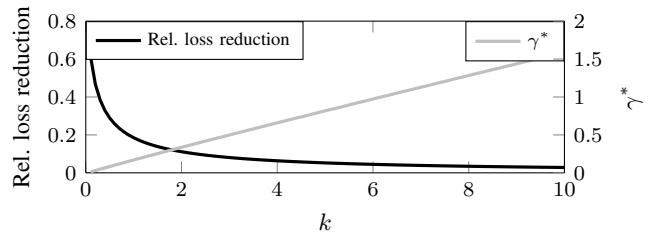
\begin{figure}
\centering

	\begin{tikzpicture}
	\pgfplotsset{every tick label/.append style={font=\footnotesize}}
		\pgfplotsset{every axis title/.append style={font=\small}}
		\pgfplotsset{every axis label/.append style={font=\small}}
\begin{axis}
	[xlabel={$k$ },
	ylabel={Rel. loss reduction },
	ylabel near ticks,
	xlabel near ticks,
	xmin=0, xmax=10,
	ymin=0, ymax=0.8,
	axis y line*= left,
	height=3.6cm,
	width=0.92\columnwidth, 
legend cell align=left,
	legend style={at={(0,1)},anchor= north west, font = \scriptsize},
	legend entries={Rel. loss reduction\\   }, 
	]
	\addplot +[mark=none, very thick,color = black ] table[x index=0,y index=1,col sep=space]{Figures/Plot_data/krelloss.txt};	
	\end{axis}
	 \begin{axis}[
    xmin = 0, xmax = 10,
    ymin = 0, ymax = 2,
    height=3.6cm,
    width=0.92\columnwidth,
    hide x axis,
    hide y axis, 
    			legend cell align=left,
	legend style={at={(1,1)},anchor= north east, font = \scriptsize},
	legend entries={$\gamma^*$\\ }, 
	]
  \addplot +[mark=none, very thick, color = gray3 ] table[x index=0,y index=1,col sep=space]{Figures/Plot_data/kgammastar.txt};	
  \end{axis}
		\begin{axis}[	
	xmin=0, xmax=10,
	ymin=0, ymax=2,	
	hide x axis,
	axis y line*= right,
	ylabel={$\gamma^*$ },
	ylabel near ticks,
	height=3.6cm,
	width=0.92\columnwidth
	]
	\end{axis}
	\end{tikzpicture}	
\vspace{-0.5cm}
\caption{Relative loss reduction with DAPI control for a test network based on the IEEE 57 bus benchmark system topology, at $\gamma = \gamma^*$, as function of $k$. Here, $m = \tau = 1$.   }
\label{fig:kgamma}
\end{figure}

\section{Discussion}
\label{sec:discussion}
In this paper, we have evaluated transient performance of an inverter-based microgrid in terms of the power losses incurred in regulating the frequency to a synchronous state after a disturbance, or in maintaining this state under persistent small disturbances. We compared two control strategies: the standard frequency droop controller and a distributed averaging PI (DAPI) controller and found that the latter has the potential to significantly reduce the transient power losses. This relative performance improvement compared to droop control is largest for sparse network topologies, such as those found in standard distribution networks and microgrids.  

This result is in sharp contrast both to previous results in \cite{BamiehGayme2013,Sjodin2014,Tegling2014}, where losses associated with frequency regulation were shown to be independent of network connectivity, as well as to standard notions of power system stability, which typically predict highly interconnected networks to have better performance. The apparent reason for our results is the self-damping terms $-\omega_i + \omega^*$ added to the consensus dynamics in \eqref{eq:Omegadapi}. These terms attenuate disturbances independently of the power flows. Increasing connectivity by introducing more lines generates more power flows, which do not affect the self damping, but increase losses.  

It is important to note, however, that the losses' scaling with the size of the network remains unchanged by the DAPI strategy, and seems to be a fundamental performance limit in systems where active power flows are the mechanism by which the system regulates frequency. Therefore, even though transient power losses typically represent a small percentage of the  total power flow, our results indicate that they may become significant when power networks become increasingly distributed and the number of generators grows. Since DAPI control both reduces transient losses and eliminates control errors, our results provide additional arguments in favor of distributed algorithms for secondary frequency control in microgrids.

We also derived results on optimal tuning of the DAPI controller for loss reduction. In particular, the distributed averaging term of \eqref{eq:dapidyn}: $\sum_{j \in \mathcal{N}^C_i} c_{ij}(\Omega_i - \Omega_j)$ should be tuned so that $c_{ij} = \gamma^*b_{ij}$, where $b_{ij}$ is the line susceptance and $\gamma^*$ is a unique positive optimizer. Too large communication gains $c_{ij}$ cause a too strong reliance on the distributed averaging in relation to the integral control, which deteriorates damping and increases losses. In the present work, we made the restrictive assumption that the graph topology for the distributed averaging follows that of the physical network, and found controller tunings that minimize losses. With more degrees of freedom, we conjecture that losses can be even further reduced by judicious control design. An important direction for future work is therefore to find an optimal topology configuration of the communication network.

\section{Acknowledgements}
We would like to thank Bassam Bamieh (UCSB) and Dennice Gayme (the Johns Hopkins University) for their many insightful comments and several interesting discussions. Funding support from the Swedish Research Council through grant 2013-5523 and the Swedish Foundation for Strategic Research through the project ICT-Psi is also gratefully acknowledged.
\section*{Appendix}
\subsection{Proof of Theorem~\ref{lem:mainres} }
We follow the approach in \cite{BamiehGayme2013} and transform the state vectors of $H_{\mathrm{std}}$ and $H_{\mathrm{DAPI}}$ so that: $\theta = U\hat{\theta}$, $\omega = U\hat{\omega}$ and $\Omega = U \hat{\Omega}$. Let $U$ be the unitary matrix which diagonalizes $L_B$. That is, $L_B = U^*\Lambda_B U$ with  $\Lambda_B = \mathrm{diag}\{\lambda_1, \lambda_2, \ldots, \lambda_N\}$. Note that by assumptions (ii)~-~(iii), $L_B,~L_C$ and $L_G^{1/2}$ are simultaneously diagonalizable, so $U^*L_CU = \gamma \Lambda_B$ and $U^*L_G^{1/2}U = \sqrt{\alpha} \Lambda_B^{1/2}$. Given that the \hn norm is unitarily invariant, we can also define $\hat{y} = U^*y$ and $\hat{w} = U^*w$. 

Through these transformations, we obtain the systems $\hat{H}_{\mathrm{std}}$ and $\hat{H}_{\mathrm{DAPI}}$ in which all blocks have been diagonalized. They thus each represent $N$ decoupled subsystems:

\noindent \textit{Standard droop control:}
\begin{align} \nonumber
\begin{bmatrix}
\dot{\hat{\theta}}_n \\ \dot{\hat{\omega}}_n 
\end{bmatrix} =& \begin{bmatrix}
0 & 1 \\ -\frac{m}{\tau} \lambda_n & -\frac{1}{\tau}
\end{bmatrix} \begin{bmatrix}
\hat{\theta}_n \\ \hat{\omega}_n
\end{bmatrix} +\begin{bmatrix}
0 \\ \frac{1}{\tau}
\end{bmatrix}\hat{w}_n \\ \label{eq:subsysdroop}
& =: \hat{A}_{\mathrm{std},n} \hat{\psi}_{\mathrm{std},n} + \hat{B}_{\mathrm{std},n}\hat{w}_n \\
\nonumber  \hat{y}_n =& \sqrt{\alpha \lambda_n} \begin{bmatrix}
1 & 0 
\end{bmatrix} \begin{bmatrix}
\hat{\theta}_n \\ \hat{\omega}_n
\end{bmatrix} = : \hat{C}_{\mathrm{std},n} \hat{\psi}_{\mathrm{std},n}. 
\end{align}

\noindent \textit{DAPI control:}
\begin{align} \nonumber
\begin{bmatrix}
\dot{\hat{\theta}}_n \\ \dot{\hat{\omega}}_n \\ \dot{\hat{\Omega}}_n
\end{bmatrix} =& \begin{bmatrix}
0 & 1 & 0 \\ -\frac{m}{\tau} \lambda_n & -\frac{1}{\tau} & -\frac{1}{\tau}\\ 
0 & -\frac{1}{k} & - \frac{1}{k}\gamma \lambda_n
\end{bmatrix} \begin{bmatrix}
\hat{\theta}_n \\ \hat{\omega}_n\\\hat{\Omega}_n
\end{bmatrix} +\begin{bmatrix}
0 \\ \frac{1}{\tau} \\ 0
\end{bmatrix}\hat{w}_n \\ \label{eq:subsysdapi}
& =: \hat{A}_{\mathrm{DAPI},n} \hat{\psi}_{\mathrm{DAPI},n} + \hat{B}_{\mathrm{DAPI},n}\hat{w}_n \\
\nonumber  \hat{y}_n =& \sqrt{\alpha \lambda_n} \begin{bmatrix}
1 & 0 & 0 
\end{bmatrix} \begin{bmatrix}
\hat{\theta}_n \\ \hat{\omega}_n \\ \hat{\Omega}_n
\end{bmatrix} = : \hat{C}_{\mathrm{std},n} \hat{\psi}_{\mathrm{std},n}. 
\end{align}

To verify that all system eigenvalues are in the left half of the complex plane with the exception of the zero eigenvalue associated with $\lambda_1 = 0$,  it suffices to consider the characteristic equations of $\hat{A}_{\rm{std},n}$ and $\hat{A}_{\rm{DAPI},n}$ respectively. The $2N-1$ nonzero eigenvalues $z$ of $A_{\rm std}$ are given by:
\[z^2 + \frac{1}{\tau}z + \frac{m}{\tau}\lambda_n = 0,\]
for $n = 2,\ldots, N$. The $3N-1$ nonzero eigenvalues of $A_{\rm DAPI}$ are given by:
\[ z^3 +z^2\left(\frac{1}{\tau} + \frac{\gamma}{k}\right) + z\left(\frac{1}{k\tau}(\gamma +1)+ \frac{m\lambda_n}{\tau} + \frac{m\gamma \lambda_n}{\tau k} \right) = 0,\]
for $n = 2,\ldots,N$. Since $L_B$ is positive semidefinite, $\lambda_n>0$. It is then easy to verify by Routh's criterion that $\mathrm{Re}\{z\}<0$ if $\gamma,m,k,\tau>0$.

Now, denote the input-output mapping of each such subsystem by $\hat{H}_{\mathrm{std},n}$ and $\hat{H}_{\mathrm{DAPI},n}$ respectively. The squared \hn norms of $H_{\mathrm{std}}$ and $H_{\mathrm{DAPI}}$ are then the sum of the squares of the decoupled subsystems' norms, i.e., $||H_{\mathrm{std}}||_2^2 = ||\hat{H}_{\mathrm{std}}||_2^2 = \sum_{n = 1}^N||\hat{H}_{\mathrm{std},n}||_2^2$, $||H_{\mathrm{DAPI}}||_2^2 = ||\hat{H}_{\mathrm{DAPI}}||_2^2 = \sum_{n = 1}^N||\hat{H}_{\mathrm{DAPI},n}||_2^2$

Notice that the subsystems $\hat{H}_{\mathrm{std},1}$ and $\hat{H}_{\mathrm{DAPI},1}$ corresponding to $\lambda_1 = 0$ have the output $\hat{y}_1 = 0$. This verifies that the zero mode is unobservable and $||\hat{H}_{\mathrm{std},1}||_2^2 = ||\hat{H}_{\mathrm{DAPI},1}||_2^2 =0 $. For $n\neq 0$, the subsystem norms are calculated by solving the Lyapunov equation for the observability Gramians $X_n$:
\begin{equation} \label{eq:lyap}
\hat{A}_n^*X_n + X_n\hat{A}_n = -\hat{C}_n^*\hat{C}_n,
\end{equation}
and taking $||\hat{H}_n||^2_2 = \mathrm{tr}\{\hat{B}_nX_n\hat{B}_n\}$. The subscripts for the standard droop control and the DAPI systems have here been left out to indicate that the equations hold for both. 

Due to space limitations, we omit the expansion of \eqref{eq:lyap}, but refer to \cite{BamiehGayme2013} or \cite{Tegling2015} for a similar derivation.

\bibliographystyle{IEEETran}
\bibliography{EmmasBib15}

\end{document}